\documentclass[12pt,reqno]{amsart}
\usepackage{amsmath,amssymb,verbatim,amscd}

\theoremstyle{plain}
\newtheorem{theorem}{Theorem}[section]
\newtheorem{proposition}[theorem]{Proposition}

\theoremstyle{definition}

\newtheorem{corollary}[theorem]{Corollary}

\newtheorem{conjecture}[theorem]{Conjecture}


\newcommand{\ds}{\displaystyle}

\DeclareMathOperator{\Hom}{Hom}

\newcommand\mh{\mathcal{H}}

\newcommand\C{\mathbb{C}}

\newcommand\frh{\mathfrak{h}}
\newcommand\frg{\mathfrak{g}}
\newcommand\frb{\mathfrak{b}}

\begin{document}

\title{PRV for the fusion product, the case $\lambda\gg\mu$}
\begin{abstract}
Given a complex simple Lie algebra $\mathfrak g$ and a positive integer $\ell$,  under the assumption $\lambda\gg\mu$, we show that irreducible representations of $\mathfrak g$ of the form $V(\lambda+w\mu)$, $w\in W,$ with level at most $\ell$ appear in the fusion product of $V(\lambda)$ and $V(\mu)$.  This verifies the existence of PRV components for the fusion product in this special case.
\end{abstract}

\author{Arzu Boysal}
\address{Department of Mathematics\\ Bo\u{g}azi\c{c}i University\\
34342 \\ Istanbul\\ Turkey.}

\email{arzu.boysal@boun.edu.tr}

\date{}
\maketitle \pagestyle{myheadings}

\markboth{}{}  \maketitle

\section{Introduction}

Given a finite dimensional complex simple Lie algebra $\frg$, and two simple $\frg$-modules $V(\lambda)$ and $V(\mu)$, with dominant integral highest weights $\lambda$ and $\mu$ respectively,  there is a `largest' component in the decomposition of their tensor product $V(\lambda)\otimes V(\mu)$, due to Cartan, which is $V(\lambda +\mu)$ and occurs with multiplicity one.  There is also a `smallest' component, existence of which was proved  by Parthasarathy, Rao and Varadarajan (PRV) in 1960s \cite{PRV}; it is $V(\overline{\lambda+w_0\mu})$ and occurs again with multiplicity one.  Here $w_0$ denotes the longest element in the Weyl group $W$ of $\frg$, and $\overline{\nu}$ denotes the unique dominant element in the $W-$orbit of $\nu$.  

The (classical) PRV conjecture, seeking the existence of components in the tensor product decomposition of a similar form, where the longest element $w_0$ above is now replaced by an arbitrary Weyl group element, was first established by Kumar \cite{Ku1} (also proving a refined conjecture in \cite{Ku2}) and Mathieu \cite{Ma} independently.  More precisely, it is shown that, for any $w \in W$
the irreducible $\mathfrak{g}$-module
$V(\overline{\lambda+w \mu})$ occurs with multiplicity at
least one in $V({\lambda})\otimes V({\mu})$, where $\overline{\lambda+w \mu}$ denotes the unique dominant element in the $W-$orbit of $\lambda+w \mu$ (cf. Section \ref{s:tensorfusion}). Subsequently, weights of the form $\overline{\lambda+w \mu}$ are called PRV weights, and the corresponding representations $V(\overline{\lambda+w \mu})$ are called PRV components of $V({\lambda})\otimes V({\mu})$. (See Khare \cite{Kh} for a historical background of the problem).

For a finite dimensional complex simple Lie algebra $\frg$ with the additional data of a positive integer $\ell$,  one constructs the  corresponding (untwisted) affine Lie algebra $\tilde \frg$ with its central element acting via the scalar $\ell$.  Then, integrable irreducible representations of $\tilde \frg$ are parametrized by a finite set $P_\ell$.
There is an associated  product structure on two such simple $\tilde \frg$-modules of level $\ell$, called the \textit{fusion product}.  It is then very  natural to ask if the PRV components of level $\ell$ appear in the fusion product decomposition. (See Conjecture \ref{c:prvf} in Section \ref{s:tensorfusion} for the precise formulation of the PRV conjecture for the fusion product).  
For the affine Lie algebra corresponding to $\frg=\mathfrak{sl_n}$, this is true by works of Belkale on quantum Horn and saturation
conjectures \cite{Be}.  For the remaining simple Lie algebras the question is open in its full generality.  In literature, this is sometimes referred as the quantum counterpart of the PRV conjecture.

Here we give an elementary algebraic proof of the existence of such PRV components for any simple Lie algebra $\frg$  under the assumption $\lambda\gg\mu$, by which we mean that all weights of $V(\mu)$ shifted by $\lambda$ lie in the dominant chamber.  The proof of this special case should be seen as supporting evidence in the direction of the quantum analogue of the PRV conjecture for any simple Lie algebra $\frg$.   
Using the definition of the fusion product, formulated in terms of `truncated' tensor product of finite dimensional representations of $\frg$ that are of level at most $\ell$, our main result is as follows (Theorem \ref{c:prvfspecial} in the main text).

\begin{theorem}\label{thm:intro}
Suppose $\lambda, \mu \in P_{\ell}$, and  $\lambda\gg\mu$.  For any $w\in W$, if ${\lambda+w \mu}$ is in $P_{\ell}$, then $V({\lambda+w \mu})$ occurs with multiplicity one in the fusion product of $V({\lambda})$ and $V({\mu})$.
\end{theorem}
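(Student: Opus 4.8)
The plan is to compute the multiplicity $N_{\lambda\mu}^{\lambda+w\mu}$ of $V(\lambda+w\mu)$ in the fusion product of $V(\lambda)$ and $V(\mu)$ directly from the ordinary tensor product, by means of the Kac--Walton truncation. First I would record the shape of the ordinary tensor product in the range $\lambda\gg\mu$: since every weight $\pi$ of $V(\mu)$ then has $\lambda+\pi$ dominant, a standard argument with Steinberg's multiplicity formula --- the hypothesis $\lambda\gg\mu$ being precisely what kills all of the nonidentity Weyl terms --- gives the stable decomposition
\[
V(\lambda)\otimes V(\mu)\;\cong\;\bigoplus_{\pi\in\wt V(\mu)}V(\lambda+\pi)^{\oplus m_\mu(\pi)} ,
\]
with $m_\mu(\pi)$ the weight multiplicities and all the weights $\lambda+\pi$ dominant. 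In particular $w\mu$, an extremal (hence multiplicity-one) weight of $V(\mu)$, contributes $V(\lambda+w\mu)$ with multiplicity one; so the classical PRV component is already there with multiplicity one, and the whole content of the theorem is that this passes unchanged to the fusion product.

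Next I would invoke the description of the fusion product as a truncated tensor product. In the Verlinde ring the class of $V(\sigma)$, for $\sigma$ dominant, equals $\epsilon(\hat w)\,[\,V(\hat w(\sigma+\rho)-\rho)\,]$, where $\hat w$ is an element of the affine Weyl group $\widehat W$ of $\tilde\frg$ that carries $\sigma+\rho$ into the closed fundamental alcove for the level-$(\ell+h^\vee)$ dot action, this class being $0$ when $\sigma+\rho$ lands on a wall. Combining this with the stable decomposition gives
\[
N_{\lambda\mu}^{\lambda+w\mu}\;=\;\sum_{\pi}\epsilon(\hat w_\pi)\,m_\mu(\pi) ,
\]
the sum over those weights $\pi$ of $V(\mu)$ with $\lambda+\pi+\rho$ in the $\widehat W$-orbit of $\lambda+w\mu+\rho$, and $\hat w_\pi$ the connecting element (unique, since $\lambda+w\mu+\rho$ is $\widehat W$-regular). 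The key elementary observation is that $\langle\rho,\theta^\vee\rangle=h^\vee-1$: together with $\lambda+w\mu\in P_\ell$ this gives $\langle\lambda+w\mu+\rho,\theta^\vee\rangle\le\ell+h^\vee-1<\ell+h^\vee$, so $\lambda+w\mu+\rho$ lies in the \emph{interior} of the fundamental alcove; hence its $\widehat W$-orbit consists of regular points, and the term $\pi=w\mu$ occurs, with $\hat w_{w\mu}=e$, contributing exactly $+1$.

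It remains to prove that no weight $\pi\neq w\mu$ of $V(\mu)$ contributes, i.e.\ that $\lambda+\pi+\rho\notin\widehat W\cdot(\lambda+w\mu+\rho)$. If $\langle\lambda+\pi,\theta^\vee\rangle\le\ell$ then, $\lambda+\pi$ being dominant, $\lambda+\pi+\rho$ is also interior to the fundamental alcove, and two points interior to a common alcove can be $\widehat W$-conjugate only if they are equal, so $\lambda+\pi+\rho=\lambda+w\mu+\rho$ and $\pi=w\mu$, a contradiction. If $\langle\lambda+\pi,\theta^\vee\rangle=\ell+1$ then $\lambda+\pi+\rho$ lies \emph{on} the affine wall, so $[V(\lambda+\pi)]=0$ in the Verlinde ring and $\pi$ contributes nothing. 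Hence the essential case is $\langle\lambda+\pi,\theta^\vee\rangle\ge\ell+2$; here I would assume for a contradiction that $\lambda+\pi+\rho=\hat w(\lambda+w\mu+\rho)$ for some $\hat w=t_\gamma u\neq e$ ($u\in W$, $\gamma$ the translation part, a lattice vector scaled by $\ell+h^\vee$). The strategy would then be to confront the identity $\gamma=(\lambda+\pi+\rho)-u(\lambda+w\mu+\rho)$ with the arithmetic in force: $\lambda,\mu\in P_\ell$ bounds from above the pairings of $\lambda$ and of the weights of $V(\mu)$ with positive coroots; $\lambda\gg\mu$ bounds how negative those pairings of the weights of $V(\mu)$ become relative to $\lambda$; $\gamma$ has all its integral pairings divisible by $\ell+h^\vee$; and $\hat w(\lambda+w\mu+\rho)$ is constrained to be dominant. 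These should combine --- after a case analysis on $u$, using $W$-invariance of the form --- to force $\gamma=0$, whence $u=e$ and $\pi=w\mu$, the desired contradiction. Then only $\hat w=e$ survives in the sum, so $N_{\lambda\mu}^{\lambda+w\mu}=1$, which is Theorem~\ref{thm:intro}.

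The heart of the argument --- and the step I expect to be the main obstacle --- is precisely this last one: the interplay between the divisibility constraint on the translation part $\gamma$, the dominance of $\hat w(\lambda+w\mu+\rho)$, and the numerical bounds coming from $\lambda,\mu\in P_\ell$ and $\lambda\gg\mu$ is delicate, and organizing it into a clean case analysis over the Weyl element $u$ and the support of $\gamma$ is where the real work lies; already the rank-one case $\frg=\mathfrak{sl}_2$ shows the obstruction to be arithmetic in nature rather than a matter of the orbit point being merely metrically close to the alcove. (One could alternatively quote the general inequality $N_{\lambda\mu}^{\lambda+w\mu}\le n_{\lambda\mu}^{\lambda+w\mu}=1$ to obtain the upper bound for free and reduce the theorem to non-vanishing of the fusion coefficient, but the non-vanishing again rests on controlling exactly these Kac--Walton corrections, so I would argue directly.)
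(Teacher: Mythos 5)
Your overall route---the Brauer/Steinberg decomposition $V(\lambda)\otimes V(\mu)=\bigoplus_{\pi\in\Pi(\mu)}m_\mu(\pi)V(\lambda+\pi)$ followed by the Kac--Walton/Faltings truncation homomorphism---is essentially the paper's, and everything through the cases $(\lambda+\pi)(H_\theta)\le\ell$ and $(\lambda+\pi)(H_\theta)=\ell+1$ is correct. But the remaining case $(\lambda+\pi)(H_\theta)\ge\ell+2$, which you yourself flag as ``where the real work lies,'' is left as an unexecuted strategy (``these should combine \dots to force $\gamma=0$''), and that case is the entire mathematical content of the theorem: without it you have only the trivial upper bound $n_{\lambda\mu}^{\lambda+w\mu}\le 1$. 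Moreover the plan you sketch---a case analysis over the finite Weyl part $u$ and the translation $\gamma$ of the connecting affine element---is not the efficient way in, since a priori nothing constrains that element to be simple.

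The paper's Proposition 4.2 closes exactly this gap with two observations you are missing. First, for any component $V(\xi)$ of the tensor product with $\ell+1<\xi(H_\theta)\le 2\ell$, the single affine reflection $s_{\theta,\ell+\check{h}}$ already carries $\xi+\rho$ into the fundamental alcove: the candidate $s_{\theta,\ell+\check{h}}\cdot\xi=\xi+(\ell+1-\xi(H_\theta))\theta$ has level $2(\ell+1)-\xi(H_\theta)\in[2,\ell]$ by direct computation, and it is dominant because it lies on the $\lambda$-shift of the (unbroken, $\mathfrak{sl}_2(\theta)$-stable) $\theta$-string of weights of $V(\mu)$ through $\xi-\lambda$, every member of which is dominant precisely by the hypothesis $\lambda\gg\mu$. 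This identifies the connecting element and its sign $-1$ with no case analysis on $(u,\gamma)$. Second, the same computation shows that the inequalities locating $s_{\theta,\ell+\check{h}}\cdot\xi$ inside that string are strict, so it is never an endpoint of the string; since the explicit decomposition shows the PRV weights $\lambda+w'\mu$ are exactly the vertices of the convex hull of $\lambda+\Pi(\mu)$, and a vertex must be an endpoint of every root string through it, the reflected weight is never of the form $\lambda+w'\mu$. Hence none of the negative (virtual) terms produced by the truncation can cancel $V(\lambda+w\mu)$, and the fusion coefficient equals $m_\mu(w\mu)=1$. You should replace your open-ended arithmetic on $\gamma$ with this string/convex-hull argument.
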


In the proof of this main result we make essential use of the algebra homomorphism from the representation ring of $\frg$ to the fusion ring of $\frg$ at level $\ell$ given by Faltings in \cite{F} for classical Lie algebras and $G_2$ (which is valid for any simple Lie algebra by works of Teleman \cite{Te} and Kumar \cite{Ku3}).  The distinguished element  $s_{\theta,l+\check{h}}$ of the affine Weyl group
associated to the highest root $\theta$ of $\frg$ plays a fundamental role in showing the nonvanishing result. 

A further consequence of Theorem \ref{thm:intro} is as follows.  For the same data $(\frg, \ell)$, and three dominant integral weights $\overrightarrow{\eta}=(\eta_1,\eta_2,\eta_3)$ in $P_\ell^3$
one constructs a vector bundle $\mathbb V_{\frg,\ell,\overrightarrow{\eta}}$ over the moduli space of $3$-pointed stable rational curves, whose fiber over a point is called the space of conformal blocks  \cite{TUY}.  Structure coefficients in the fusion product decomposition are intimately related to the dimensions of these spaces (cf.~Section \ref{section:fusion_defn}).  Denoting the dual of a weight $\nu$ by $\nu^*$, it follows from the theory of weights that if  $\nu$ is in $P_\ell$, then so is $\nu^*$.  With the above notation, our result demonstrates that, under the assumptions of Theorem \ref{thm:intro}, the  rank of the vector bundle $\mathbb V_{\frg,\ell,\overrightarrow{\eta}}$ with $\overrightarrow{\eta}=(\lambda,\mu, (\lambda+w \mu)^*)$ is equal to the dimension of the vector space of $\frg$-invariants of the tensor product of representations with highest weights corresponding to $\overrightarrow{\eta}$ (and both equal to one).  
For any triple  $\overrightarrow{\eta} \in P_\ell^3$ this equality of ranks necessarily holds for $\ell$ beyond a critical level. We remark that although our assumption in Theorem \ref{thm:intro} is restrictive on the distribution of weights, we do not impose such a bound on the level $\ell$. 

To describe our results precisely, we set up the notation and recall basic definitions and concepts in Section \ref{s:prelim}.  In Section \ref{s:tensorfusion} we recall the definition of the fusion product and its properties that are pertinent to the proof of the main result which is given in Section \ref{section:main}, where an explicit decomposition is also given at the end.

\section{Preliminaries}\label{s:prelim}
The main reference for this section is \cite{B}.

Let $G$ be a connected, simply connected, simple affine algebraic group over $\C$. We fix a Borel subgroup $B$ of $G$ and a maximal torus $T\subset B$. Let $\frh$,$\frb$ and $\frg$ denote the Lie algebra of
$T$, $B$ and $G$ respectively.  

Let $R=R(\frh,\frg)\subset \frh^*$ be
the root system; there is the root space decomposition $\frg=\frh
\oplus (\oplus_{\alpha \in R}\frg_{\alpha})$.  The choice of $\frb$ determines a set of simple roots $\Delta=\{\alpha_1, \dots, \alpha_r\}$ of $R$,
where $r$ is the rank of $G$.  Let $\frh_{\mathbb{R}}$ denote the real span of elements of $\frh$
dual to $\Delta$.  For each root $\alpha$, denote by $H_{\alpha}$ the
unique element of $[\frg_{\alpha},\frg_{-\alpha}]$ such that
$\alpha(H_{\alpha})=2$; it is called the coroot associated to the
root $\alpha$.
For any $\alpha \in R$, the Lie subalgebra $\frg_{\alpha}\oplus
[\frg_{\alpha},\frg_{-\alpha}] \oplus \frg_{-\alpha}$ is isomorphic
as a Lie algebra to $\mathfrak{sl}_2$ and thus will be denoted by
$\mathfrak{sl}_2(\alpha)$. 

Let $\{\omega_i \}_{1\leq i\leq r}$ be the set of fundamental
weights, defined as the basis of $\frh^*$ dual to
$\{H_{\alpha_i}\}_{1\leq i\leq r}$. We define the weight lattice
$P=\{ \lambda \in \frh^* : \lambda(H_{\alpha}) \in \mathbb{Z},\;
\forall \alpha \in R \}$, and denote the set of dominant weights by
$P^+$, that is,
$$P^+:=\{ \lambda \in \frh^* : \lambda(H_{\alpha_i}) \in
\mathbb{Z}_{\geq 0}\;
\forall \alpha_i \in \Delta \},$$ where $\mathbb{Z}_{\geq 0}$ is the set of
nonnegative integers.  The  set $P^+$ parametrizes the set of isomorphism
classes of all the finite dimensional irreducible representations of $\frg$.
For $\lambda\in P^+$, let $V(\lambda)$
be the associated  finite dimensional irreducible
representations of $\frg$ with highest weight $\lambda$. 

Let $\rho$ denote the sum of fundamental
weights and $\theta$ the highest root of
$\frg$.  Define $\check{h}(\frg):=1+\rho(H_{\theta})$, denoted in short by $\check{h}$; it is called the dual Coxeter number.


Let $(\;|\;)$ denote the Killing form on $\frg$ normalized such that
$(H_{\theta}|H_{\theta})=2$.  We will use the same notation for the restricted form on
$\frh$, and the induced form on $\frh^*$.  We introduce the Weyl group $W:=N_G(T)/T$.
The Killing form is positive definite on $\frh_{\mathbb{R}}$, it induces the following canonical isomorphism
\[F: \frh_{\mathbb{R}}^* \to \frh_{\mathbb{R}}\;;\;\; \alpha \mapsto (2/(H_{\alpha}| H_{\alpha}))H_{\alpha}\]
with $F^{-1}(H_\alpha)=(2/(\alpha|\alpha))\alpha$.
Under the above identification, $W$ is thought as generated by elements $w_{\alpha}$ for $\alpha \in \Delta$; acting
on $\frh_{\mathbb{R}}^*$ as $$w_{\alpha}\beta=\beta-\beta(H_{\alpha})\alpha.$$

Denote $P_{\mathbb R}:=P\otimes_{\mathbb Z}\mathbb R$. Any root
$\alpha$ in $R$  defines a {\it wall} $H_{\alpha}=\{\lambda\in P_{\mathbb R}: (\lambda|\alpha)=0\}.$
The closures of the connected components of $P_{\mathbb R}\setminus
\bigl(\cup_{\alpha\in R}\,H_{\alpha}\bigr)$ are called {\it chambers}.
Then, any chamber is a fundamental domain for the action of $W$ on  $P_{\mathbb R}$.  The chamber $C$ defined by the conditions $\{(\lambda|\alpha_i)\geq 0, i \in\Delta\}$ is called the \textit{dominant chamber}.

Let $w_0$ denote the longest element in the Weyl group.  Then for any $\lambda \in P$, the dual of $\lambda$
denoted by $\lambda^*$ is $-w_0\lambda$.   For $\lambda\in P^+, \lambda^*$ is again in $P^+$ and, moreover,
 $V(\lambda)^*\simeq V(\lambda^*).$


Let $\tilde{\frg}=\frg \otimes \mathbb{C}((z))\oplus
\mathbb{C}\mbox{C}$ denote the (untwisted) affine Lie algebra
associated to $\frg$ over $\mathbb{C}((z))$ (where $\mathbb{C}((z))$
denotes the field of Laurent polynomials in one variable $z$), with
the Lie bracket
\[[x\otimes f,y\otimes g]=[x,y]\otimes fg+ (x|y) Res_{z=0}(gdf)\cdot \mbox{C}
\;\mbox{and}\;[\frg,\mbox{C}]=0\] for $x,y \in \frg$ and $f,g \in
\mathbb{C}((z))$.


We fix a positive integer $\ell$, called the {\it level}, and define
the set \[P_{\ell}(G):=\{\lambda \in P^+ : \lambda(H_{\theta})\leq
\ell\};\] it is finite and closed under taking the dual; we denote it in short by $P_{\ell}$. For any
$\lambda \in P^+$, the integer $\lambda(H_{\theta})$ is said to be
the {\it level of the representation} $V({\lambda})$.

It is well known from representation theory of affine Lie algebras
that for each $\lambda \in P_{\ell}$ there exists a unique (upto
isomorphism) left $\tilde{\frg}$-module $\mh({\lambda})$, called
the integrable highest weight $\tilde{\frg}$-module, with the
central element $\mbox{C}$ acting as $\ell \cdot \mbox{Id}$ (see
e.~g.~\cite{K}).

\section{Tensor product, fusion product and PRV components}\label{s:tensorfusion}

Let $\mathcal{R}(\frg)$ denote the ring of finite dimensional representations of $\frg$; it is a free
$\mathbb{Z}$-module with basis $\{V({\lambda}):\;\lambda \in P^+\}$ and product structure
\[V(\lambda)\otimes V(\mu)=\ds \sum_{\nu \in P^+} m_{\lambda
\mu}^{\nu} V(\nu),\] where $m_{\lambda \mu}^{\nu}=\dim
\Hom_{\frg}(V(\lambda)\otimes V(\mu)\otimes V(\nu^*),
\mathbb{C})$, with $\mathbb{C}$ thought as a trivial $\frg$
module.

As the dominant chamber $C$ is a fundamental domain for the action of $W$ on $P_{\mathbb R}$,  any $\xi\in P$ can be expressed as $w\overline{\xi}$ with $w\in W$ and $\overline{\xi}\in P^+=P\cap C$.  The dominant weight $\overline{\xi}$ in the $W-$ orbit of $\xi$ is uniquely determined.

With the above notation, the classical Parthasarathy--Ranga Rao--Varadarajan (PRV) conjecture (already proven, see \cite{Ku1,Ku2} or \cite{Ma}) is as follows:
\begin{theorem} Let
$V({\lambda_1})$ and $V({\lambda_2})$ be two finite dimensional
irreducible $\mathfrak{g}$-modules with highest weights
$\lambda_1$ and $\lambda_2$ respectively. Then, for any $w \in W$
the irreducible $\mathfrak{g}$-module
$V(\overline{\lambda_1+w \lambda_2})$ occurs with multiplicity at
least one in $V({\lambda_1})\otimes V({\lambda_2})$, where
$\overline{\lambda_1+w \lambda_2}$ denotes the unique dominant
element in the $W-$orbit of $\lambda_1+w \lambda_2$.  
\end{theorem}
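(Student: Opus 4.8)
The plan is to establish the theorem geometrically on the flag variety $X:=G/B$, in the spirit of Kumar \cite{Ku1}. By the Borel--Weil theorem, for a dominant weight $\lambda$ the line bundle $\mathcal{L}_{\lambda}=G\times_{B}\C_{-\lambda}$ on $X$ satisfies $H^{0}(X,\mathcal{L}_{\lambda})\cong V(\lambda)$ as $G$-modules and has vanishing higher cohomology, so that
\[
H^{0}\bigl(X\times X,\ \mathcal{L}_{\lambda_{1}}\boxtimes\mathcal{L}_{\lambda_{2}}\bigr)\ \cong\ V(\lambda_{1})\otimes V(\lambda_{2})
\]
as $G$-modules under the diagonal action. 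Since $\frg$-modules are semisimple, it is equivalent to produce $V(\overline{\lambda_{1}+w\lambda_{2}})$ as a $G$-quotient of the right-hand side.

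First I would introduce the $G$-stable subvariety $Z_{w}\subseteq X\times X$ of pairs of flags in relative position at most $w$; it is the closure of the diagonal $G$-orbit of $(eB,\dot wB)$, and projection to the first factor presents it as the fibre bundle $G\times_{B}X_{w}$ over $X$ with fibre the Schubert variety $X_{w}=\overline{BwB/B}$. Restricting $\mathcal{L}_{\lambda_{1}}\boxtimes\mathcal{L}_{\lambda_{2}}$ to $Z_{w}$, pushing forward along this projection, and invoking Demazure's computation of the cohomology of $\mathcal{L}_{\lambda_{2}}|_{X_{w}}$ on $X_{w}$ (the Demazure module $D_{w}(\lambda_{2})$, with vanishing higher cohomology), one identifies
\[
H^{0}\bigl(Z_{w},\ (\mathcal{L}_{\lambda_{1}}\boxtimes\mathcal{L}_{\lambda_{2}})|_{Z_{w}}\bigr)\ \cong\ H^{0}\bigl(X,\ \mathcal{L}_{\lambda_{1}}\otimes(G\times_{B}D_{w}(\lambda_{2}))\bigr).
\]
An induction over the length of $w$ through the minimal parabolics containing $B$ --- equivalently, an iterated Demazure-operator computation of the character, legitimate once one also knows that this sheaf on $X$ has vanishing higher cohomology --- then shows that $V(\overline{\lambda_{1}+w\lambda_{2}})$ occurs as a $G$-constituent of this space. (The relevant section can also be seen concretely: the vector $v_{\lambda_{1}}\otimes v_{w\lambda_{2}}\in V(\lambda_{1})\otimes V(\lambda_{2})$, with $v_{\lambda_{1}}$ a highest weight vector and $v_{w\lambda_{2}}$ an extremal vector of weight $w\lambda_{2}$, has weight $\lambda_{1}+w\lambda_{2}$ and, by the dominance of $\lambda_{1}$ and $\lambda_{2}$, is annihilated by the nilradical of the Lie algebra of the isotropy subgroup $B\cap\dot wB\dot w^{-1}$ of $(eB,\dot wB)$, so its image in $H^{0}(Z_{w},-)$ is a nonzero section of the expected weight.)

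The remaining step links $H^{0}$ on $Z_{w}$ with $H^{0}$ on $X\times X$: one needs the restriction map
\[
H^{0}\bigl(X\times X,\ \mathcal{L}_{\lambda_{1}}\boxtimes\mathcal{L}_{\lambda_{2}}\bigr)\ \longrightarrow\ H^{0}\bigl(Z_{w},\ (\mathcal{L}_{\lambda_{1}}\boxtimes\mathcal{L}_{\lambda_{2}})|_{Z_{w}}\bigr)
\]
to be surjective. This follows once one knows that $Z_{w}$ has rational singularities and that $H^{i}\bigl(X\times X,\ (\mathcal{L}_{\lambda_{1}}\boxtimes\mathcal{L}_{\lambda_{2}})\otimes\mathcal{I}_{Z_{w}}\bigr)$ vanishes for $i>0$, where $\mathcal{I}_{Z_{w}}$ is the ideal sheaf of $Z_{w}$; in characteristic zero these properties can be derived from the corresponding standard facts for ordinary Schubert varieties (normality, rational singularities, and the reduced normal-crossing structure of the Schubert boundary), or obtained uniformly via a compatible Frobenius splitting of $X\times X$ after reduction modulo a prime. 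Combining the surjection with the previous paragraph shows that $V(\overline{\lambda_{1}+w\lambda_{2}})$ is a $G$-quotient of $V(\lambda_{1})\otimes V(\lambda_{2})$, hence a summand, so it occurs with multiplicity at least one, as claimed.

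I expect this last, cohomological input --- the good singularities of $Z_{w}$ together with the vanishing of the higher cohomology of the twisted ideal sheaf --- to be the main obstacle: it is the one ingredient with no purely formal or elementary substitute, everything else being either standard representation theory or Demazure-type bookkeeping. An alternative proof bypassing algebraic geometry proceeds through Littelmann's path model: one concatenates the straight path to $\lambda_{1}$ with an extremal Lakshmibai--Seshadri path of shape $\lambda_{2}$ ending at $w\lambda_{2}$ and checks that raising the concatenation to a dominant path lands precisely at the weight $\overline{\lambda_{1}+w\lambda_{2}}$; there the crux is exactly the verification that one reaches this weight and no strictly larger dominant weight.
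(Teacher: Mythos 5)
The first thing to say is that the paper does not prove this statement at all: it is quoted as a known theorem, with the proof attributed to Kumar \cite{Ku1,Ku2} and Mathieu \cite{Ma}, and the paper's own contribution lies elsewhere (the fusion analogue under $\lambda\gg\mu$). So there is no in-paper argument to compare against; your proposal has to be judged as a free-standing proof of the classical PRV theorem. As a strategy it is recognizably the geometric proof in the literature (closest to Mathieu's, and to the Frobenius-splitting treatment in Brion--Kumar): restrict $\mathcal{L}_{\lambda_1}\boxtimes\mathcal{L}_{\lambda_2}$ to the $G$-orbit closure $Z_w\cong G\times_B X_w$ in $X\times X$, identify $H^0(Z_w,-)$ via the Demazure module, and use surjectivity of restriction. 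The skeleton is sound.

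But as written it is an outline, not a proof, and the two places where it defers are exactly where the theorem lives. First, the surjectivity of $H^0(X\times X,\mathcal{L}_{\lambda_1}\boxtimes\mathcal{L}_{\lambda_2})\to H^0(Z_w,-)$ is not a routine consequence of ``standard facts for ordinary Schubert varieties''; establishing the required normality/rational singularities of $Z_w$ and the vanishing of $H^1$ of the twisted ideal sheaf (e.g.\ via a Frobenius splitting of $X\times X$ compatible with all the $Z_w$) is essentially the content of Mathieu's paper, and you invoke it rather than prove it. Second, and more seriously, the step asserting that $V(\overline{\lambda_1+w\lambda_2})$ occurs in $H^0\bigl(X,\mathcal{L}_{\lambda_1}\otimes(G\times_B D_w(\lambda_2))\bigr)$ is itself the combinatorial heart of PRV and is left as ``an iterated Demazure-operator computation''; no such computation is carried out, and it is not a formality. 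Your parenthetical attempt to make it concrete does not close this gap: exhibiting the vector $v_{\lambda_1}\otimes v_{w\lambda_2}$ of weight $\lambda_1+w\lambda_2$ (or a corresponding nonzero section) only shows that $\lambda_1+w\lambda_2$ is a weight of the tensor product, which is obvious; every weight of $V(\lambda_1)\otimes V(\lambda_2)$ is a weight of \emph{some} irreducible constituent, and the whole difficulty of PRV is to show that the constituent $V(\overline{\lambda_1+w\lambda_2})$ itself appears. That vector is in general not a highest weight vector and its weight need not be dominant, so no conclusion about constituents follows from its existence. The Littelmann path alternative you mention has the same status: correct in outline, with the decisive verification (that the dominant raising of the concatenated path ends at $\overline{\lambda_1+w\lambda_2}$) left unproved. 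In short: right road map, but the proof is not there.
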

Weights of the form $\overline{\lambda_1+w \lambda_2}$ are called PRV weights, and corresponding components $V(\overline{\lambda_1+w \lambda_2})$ are called PRV components of the tensor product $V({\lambda_1})\otimes V({\lambda_2})$.

We are interested in a natural generalization of the above theorem for the decomposition of the {\it fusion product} of two  
integrable irreducible representations of $\tilde{\frg}$ of level  $\ell$.  There are various equivalent definitions of the fusion product (see for example \cite{BK}), 
we will use the one that allows us to formulate the problem in terms of a `truncated product' of finite dimensional representations of $\frg$ 
that have level at most $\ell$.  

\subsection{Definition of fusion product}\label{section:fusion_defn}
The fusion ring associated to $\mathfrak{g}$ and a fixed nonnegative integer $\ell$,
$\mathcal{R}_{\ell}(\mathfrak{g})$, is a free $\mathbb{Z}$-module
with basis $\{ V(\lambda): \lambda \in P_{\ell} \}$. The ring structure, called the {\it fusion product}, is
defined as follows:
$$V(\lambda)\otimes ^F V(\mu):= \ds \bigoplus_{\nu \in P_{\ell}^{+}}
n_{\lambda,\mu}^{\nu} V(\nu),$$ where $n_{\lambda,\mu}^{\nu}$ is
the dimension of 
$$V_{\mathbb{P}^1}^\dag(\lambda,\nu,\mu^*):=\Hom_{\frg\otimes\mathcal{O}(\mathbb{P}^1-\overrightarrow{p})}(\mh (\lambda)\otimes
\mh (\mu)\otimes\mh (\nu^*),\mathbb{C}).$$ Above
$\frg\otimes\mathcal{O}(\mathbb{P}^1-\overrightarrow{p})$ denotes the Lie algebra of $\frg$-valued regular functions on the Riemann
sphere punctured at 3 distinct points $\overrightarrow{p}=\{p_1,p_2,p_3\}$.  We refer to \cite{TUY} for the precise action of $\frg\otimes\mathcal{O}(\mathbb{P}^1-\overrightarrow{p})$
on $\mh (\lambda)\otimes \mh (\mu)\otimes\mh (\nu^*)$, above $\mathbb{C}$ is thought as a trivial module for the algebra.  The space $V_{\mathbb{P}^1}^\dag(\lambda,\nu,\mu^*)$ is called the {\it space of conformal blocks} on $\mathbb{P}^1$ with three marked points $\overrightarrow{p}$ and weights
$\lambda,\mu,\nu^*$ attached to them with central charge $\ell$; its dimension, that is $n_{\lambda,\mu}^{\nu}$, is given by the Verlinde formula.

By its definition the fusion product $\otimes^F$ is commutative; it is also associative as a result of the `factorization rules' of \cite{TUY}.
Furthermore, the canonical map
\[V_{\mathbb{P}^1}^\dag(\lambda,\nu,\mu^*)\to \Hom_{\frg}(V(\lambda)\otimes V(\mu)\otimes
V(\nu^*), \mathbb{C})\] induced from the natural inclusion
$V(\lambda)\otimes V(\mu)\otimes V(\nu^*)\hookrightarrow
\mh (\lambda)\otimes \mh (\mu)\otimes\mh (\nu^*)$ is an injection
\cite{SU}.  In particular, for any $\ell \in \mathbb{Z}_{>0}$, the
inequality 
\begin{equation}\label{ineq:tensorfusion}
n_{\lambda,\mu}^{\nu}\leq m_{\lambda,\mu}^{\nu}
\end{equation}
holds.

We introduce some additional notation.

Let $W_\ell$ be the group of
affine transformations of $P_{\mathbb R}$ generated by the (finite) Weyl group $W$ and the translation
$\lambda\mapsto \lambda+(\ell+\check{h})\theta$. Then, $W_\ell$ is the semi-direct product
of $W$ by the lattice $(\ell+\check{h})Q^{\text{long}}$, where
$Q^{\text{long}}$ is the sublattice of $P$ generated by the long roots. 
For any root
$\alpha\in R$ and $n\in \mathbb Z$, define the {\it affine wall}
\[H_{\alpha,n}=\{\lambda\in P_{\mathbb R}: (\lambda|\alpha)=n(\ell+\check{h})\}.\]
The closures of the connected components of $P_{\mathbb R}\setminus
\bigl(\cup_{\alpha\in R, n\in \mathbb Z}\,H_{\alpha,n}\bigr)$ are called the {\it alcoves}.
Then, any alcove is a fundamental domain for the action of $W_\ell$. The
{\it fundamental alcove} is by definition
\[A_\ell^o=\{\lambda\in P_{\mathbb{R}}: \lambda(H_{\alpha_i})\geq 0\, \forall\,
\alpha_i \in \Delta, \,\text{and}\, \lambda(H_\theta)\leq \ell+\check{h}\}.\]
We remark that weights in the interior of the alcove 
$A_\ell^o$ are  of the form $\lambda +\rho$ for $\lambda \in P_\ell$.


\subsection{Definition of $\pi :  \mathcal{R}(\frg) \to   \mathcal{R}_{\ell}(\frg)$}\label{section:defn_pi} 
Following Beauville \cite{Bea}, we define the $\mathbb{Z}$-linear map $\pi$ as follows:  
$$\pi(V({\lambda}))= \begin{cases} 
0, & \text{if}\; \lambda+\rho\;\;\text{lies on an affine wall}\\  
\varepsilon(w)V(\mu) & \text{otherwise}
\end{cases}
$$
 where
$\mu$ is the unique element of $P_\ell$ and $w$ of $W_\ell$ such that  $\lambda+\rho=w(\mu+\rho)$. Here  $\varepsilon(w)$ is the sign of
the affine Weyl group element $w$.  With the above definition, we recall the following theorem which is due to Faltings  \cite{F} for classical Lie algebras and $G_2$. It is valid for any simple Lie algebra by works of Teleman \cite{Te} and Kumar \cite{Ku3}.

\begin{theorem} \label{t:fhom}
The $\mathbb{Z}$-linear map
 $\pi :  \mathcal{R}(\frg) \to   \mathcal{R}_{\ell}(\frg)$  is an algebra homomorphism with respect to
the fusion product on $ R_\ell(\frg)$.
\end{theorem}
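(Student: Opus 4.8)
The plan is to deduce the statement from the Verlinde formula by comparing characters of $\frg$. Recall that the Verlinde formula, which computes the fusion coefficients $n_{\lambda,\mu}^{\nu}$ in terms of the symmetric unitary $S$-matrix $(S_{\lambda\sigma})_{\lambda,\sigma\in P_\ell}$, is equivalent to the assertion that for each $\sigma\in P_\ell$ the $\mathbb Z$-linear map
\[
\phi_\sigma\colon\mathcal{R}_{\ell}(\frg)\longrightarrow\C,\qquad V(\nu)\longmapsto S_{\nu\sigma}/S_{0\sigma},
\]
is a ring homomorphism for the fusion product, and that the joint map $(\phi_\sigma)_{\sigma\in P_\ell}$ becomes injective after $\otimes_{\mathbb Z}\C$ (because $S$ is invertible). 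By the classical evaluation formula of Kac one has $S_{\nu\sigma}/S_{0\sigma}=\chi_{V(\nu)}(t_\sigma)$, where $\chi_{V(\nu)}$ is the character of $V(\nu)$ and $t_\sigma=\exp\bigl(2\pi i\,F((\sigma+\rho)/(\ell+\check{h}))\bigr)\in T$; since $\sigma+\rho$ lies in the interior of the fundamental alcove $A_\ell^o$, each $t_\sigma$ is a regular element of $T$, so the Weyl denominator does not vanish at it.

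On the other side, for $\sigma\in P_\ell$ consider the $\mathbb Z$-linear map $\psi_\sigma\colon\mathcal{R}(\frg)\to\C$, $V(\lambda)\mapsto\chi_{V(\lambda)}(t_\sigma)$; this is a ring homomorphism because $\chi_{V\otimes V'}=\chi_V\chi_{V'}$. The core of the argument is the identity $\psi_\sigma=\phi_\sigma\circ\pi$ for every $\sigma\in P_\ell$. To prove it, write the Weyl character formula $\chi_{V(\lambda)}=J(\lambda+\rho)/J(\rho)$ with $J(\xi)=\sum_{w\in W}\varepsilon(w)e^{w\xi}$ and evaluate at $t_\sigma$. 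The function $\xi\mapsto J(\xi)(t_\sigma)$ is $W$-anti-invariant by construction; it is also invariant under the translation lattice $(\ell+\check{h})Q^{\text{long}}$, since for a long root $\beta$ the normalization $(H_\theta|H_\theta)=2$ gives $(\beta|\sigma+\rho)=(\sigma+\rho)(H_\beta)\in\mathbb Z$, whence $e^{(\ell+\check{h})\beta}(t_\sigma)=e^{2\pi i(\beta|\sigma+\rho)}=1$. Therefore $\xi\mapsto J(\xi)(t_\sigma)$ is anti-invariant under all of $W_\ell$. Consequently, if $\lambda+\rho$ lies on an affine wall it is fixed by a reflection in $W_\ell$ and $J(\lambda+\rho)(t_\sigma)=0$; and if not, writing $\lambda+\rho=w(\mu+\rho)$ with $\mu\in P_\ell$, $w\in W_\ell$ (so $\mu+\rho$ is the unique representative in the interior of $A_\ell^o$), we get $J(\lambda+\rho)(t_\sigma)=\varepsilon(w)J(\mu+\rho)(t_\sigma)$. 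Dividing by $J(\rho)(t_\sigma)\neq0$ shows that $\chi_{V(\lambda)}(t_\sigma)$ equals $0$, respectively $\varepsilon(w)\chi_{V(\mu)}(t_\sigma)$, which is precisely $\phi_\sigma\bigl(\pi(V(\lambda))\bigr)$ by the definition of $\pi$.

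Granting this identity, the theorem follows formally: for $x,y\in\mathcal{R}(\frg)$ and any $\sigma\in P_\ell$,
\[
\phi_\sigma\bigl(\pi(xy)\bigr)=\psi_\sigma(xy)=\psi_\sigma(x)\,\psi_\sigma(y)=\phi_\sigma(\pi x)\,\phi_\sigma(\pi y)=\phi_\sigma\bigl((\pi x)(\pi y)\bigr),
\]
and since $(\phi_\sigma)_{\sigma\in P_\ell}$ is jointly injective on $\mathcal{R}_{\ell}(\frg)\otimes_{\mathbb Z}\C$ while $\pi(xy)$ and $(\pi x)(\pi y)$ both lie in $\mathcal{R}_{\ell}(\frg)$, we conclude $\pi(xy)=(\pi x)(\pi y)$, i.e.\ $\pi$ is an algebra homomorphism. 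I expect the main obstacle to be the bookkeeping in the middle step: choosing the evaluation points $t_\sigma$ so that Kac's formula matches the $S$-matrix normalization used in the Verlinde formula, and then verifying the $W_\ell$-anti-invariance of $\xi\mapsto J(\xi)(t_\sigma)$ — especially reconciling the translation lattice $(\ell+\check{h})Q^{\text{long}}$ with the integrality statement $(\beta|\sigma+\rho)\in\mathbb Z$ for long roots $\beta$, where the normalization of the Killing form and the root/coroot distinction must be tracked with care. The remainder of the argument is formal once the Verlinde formula is taken as input.
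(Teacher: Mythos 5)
The paper does not prove Theorem \ref{t:fhom} at all: it quotes it from Faltings \cite{F} (classical types and $G_2$) and Teleman \cite{Te}, Kumar \cite{Ku3} (general case), so there is no internal argument to compare yours against line by line. Your derivation itself is internally coherent: the evaluation points $t_\sigma=\exp\bigl(2\pi i\,F(\sigma+\rho)/(\ell+\check{h})\bigr)$ are regular because $0<(\alpha|\sigma+\rho)<\ell+\check{h}$ for every positive root $\alpha$ when $\sigma+\rho$ is interior to $A_\ell^o$; the $W_\ell$-anti-invariance of $\xi\mapsto J(\xi)(t_\sigma)$ is correct, the translation part reducing exactly to $(\beta|\sigma+\rho)=(\sigma+\rho)(H_\beta)\in\mathbb Z$ for long $\beta$ (where $F(\beta)=H_\beta$ under the normalization $(H_\theta|H_\theta)=2$), consistent with the lattice $(\ell+\check{h})Q^{\text{long}}$ appearing in $W_\ell$; and the joint injectivity of $(\phi_\sigma)_{\sigma\in P_\ell}$ after $\otimes_{\mathbb Z}\C$ follows from invertibility of $S$ together with $S_{0\sigma}\neq 0$. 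Given those inputs, the formal conclusion $\pi(xy)=(\pi x)(\pi y)$ is sound.

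The genuine issue is logical order rather than a local error. You take as input the Verlinde formula for the coefficients $n_{\lambda,\mu}^{\nu}$, which in this paper are \emph{defined} as dimensions of the conformal block spaces $V_{\mathbb{P}^1}^\dag(\lambda,\nu,\mu^*)$. In the references the paper relies on, the Verlinde formula for these conformal-block dimensions is \emph{deduced from} the homomorphism property of $\pi$ (that is the content of Faltings' ``A proof for the Verlinde formula,'' with the missing vanishing statements supplied by Teleman and Kumar); the hard analytic/cohomological work lives precisely in establishing Theorem \ref{t:fhom} directly from the definition of conformal blocks. So, as written, your argument is circular relative to the cited literature: it proves the theorem only modulo an independent proof of the Verlinde formula for conformal blocks, which you would need to source explicitly. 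If the Verlinde formula is granted as a black box from elsewhere, your deduction is a clean and essentially standard diagonalization argument (it is the reverse direction of the equivalence Beauville explains in \cite{Bea}); if not, the key content of the theorem has been assumed rather than proved.
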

We will use this theorem in a fundamental way in the next section.

Now, as $A_\ell^o$ is a fundamental domain for the action of $W_\ell$ on $P_{\mathbb R}$,  given $\lambda_1,\lambda_2 \in P_\ell$ and $w\in W_\ell$, there exits $\tilde w \in W_\ell$ such that
$$\tilde w\big(\frac{\ell+\check{h}}{\ell}\lambda_1+w(\frac{\ell+\check{h}}{\ell}\lambda_2)\Big) \in A_\ell^o$$ is the unique element in the $W_\ell$ orbit of $\frac{\ell+\check{h}}{\ell}\lambda_1+w(\frac{\ell+\check{h}}{\ell}\lambda_2)$.  Then, by construction, $\frac{\ell}{\ell+\check{h}}(\tilde w(\frac{\ell+\check{h}}{\ell}\lambda_1+w(\frac{\ell+\check{h}}{\ell}\lambda_2)))$ is in $P_\ell$, and it can be expressed as
$\tilde w'(\lambda_1+w'(\lambda_2))\;(\text{mod}\;\ell Q^{\text{long}})$ for some $\tilde w', w'\in W$.  We denote this element in $P_\ell$ (uniquely determined by the triple $\lambda_1,\lambda_2$ and $w$) by $\overline{\lambda_1+w \lambda_2}^F$.

In the special case that $\lambda_1,\lambda_2\in P_\ell$ and $w\in W$ satisfying $\lambda_1+w\lambda_2\in P_\ell$, we have $\overline{\lambda_1+w \lambda_2}^F=\lambda_1+w \lambda_2$.

With the notation as above, the PRV conjecture for the fusion product is as follows: 
\begin{conjecture}\label{c:prvf}
Suppose $\lambda$ and $\mu$ are in $P_{\ell}$. Then, for any $w \in W_\ell$
the irreducible $\mathfrak{g}$-module $V(\overline{\lambda+w \mu}^F)$ occurs with multiplicity at
least one in $V({\lambda})\otimes^F V({\mu})$.  
\end{conjecture}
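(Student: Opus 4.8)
The plan is to transport the classical PRV theorem into the fusion ring by means of the algebra homomorphism $\pi$ of Theorem \ref{t:fhom}. Since $\lambda,\mu\in P_\ell$, the weights $\lambda+\rho$ and $\mu+\rho$ lie in the interior of the fundamental alcove $A_\ell^o$, so that $\pi(V(\lambda))=V(\lambda)$ and $\pi(V(\mu))=V(\mu)$. As $\pi$ is multiplicative for $\otimes^F$, applying it to the decomposition of the ordinary tensor product gives
\begin{equation}\label{eq:prvf-transport}
V(\lambda)\otimes^F V(\mu)=\pi\bigl(V(\lambda)\otimes V(\mu)\bigr)=\sum_{\nu\in P^+}m_{\lambda\mu}^{\nu}\,\pi(V(\nu)).
\end{equation}
Thus every component of the ordinary tensor product contributes, up to a sign and after folding by $W_\ell$, to the fusion product, and the task is to isolate the contribution landing on $V(\overline{\lambda+w\mu}^F)$ and to show its total coefficient is at least one.

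I would first reduce the affine parameter $w\in W_\ell$ to its finite part. Writing $W_\ell=W\ltimes(\ell+\check h)Q^{\text{long}}$, decompose $w=t\circ\sigma$ with $\sigma\in W$ and $t$ a translation by $(\ell+\check h)Q^{\text{long}}$. Because $t$ belongs to the translation lattice of $W_\ell$, it is absorbed when folding into $A_\ell^o$; a direct computation with the defining rescaling then shows that $\overline{\lambda+w\mu}^F$ depends on $w$ only through $\sigma$ and coincides with the fold of $\lambda+\sigma\mu$. The combinatorial identity to verify is that this fold, defined through the $\tfrac{\ell+\check h}{\ell}$-rescaling, agrees with the $\rho$-shifted dot-action fold governing $\pi$, so that $\pi\bigl(V(\overline{\lambda+\sigma\mu})\bigr)=\pm V(\overline{\lambda+w\mu}^F)$ whenever $(\overline{\lambda+\sigma\mu})+\rho$ avoids the affine walls. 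Granting this, the classical PRV theorem supplies the summand $m_{\lambda\mu}^{\overline{\lambda+\sigma\mu}}\,V(\overline{\lambda+\sigma\mu})$ in $V(\lambda)\otimes V(\mu)$ with $m_{\lambda\mu}^{\overline{\lambda+\sigma\mu}}\ge 1$, and its image under $\pi$ is a nonzero multiple of $V(\overline{\lambda+w\mu}^F)$.

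The main obstacle is the \emph{cancellation of signs} in \eqref{eq:prvf-transport}. Several distinct dominant weights $\nu$ occurring in $V(\lambda)\otimes V(\mu)$ can be folded by $\pi$ onto the same level-$\ell$ weight $\overline{\lambda+w\mu}^F$, each carrying a sign $\varepsilon(\tilde w)$ determined by its affine Weyl representative, while any $\nu$ with $\nu+\rho$ on an affine wall contributes nothing; these effects can interfere destructively. The heart of the conjecture is therefore a genuine nonvanishing statement, namely that after collecting all contributions the coefficient of $V(\overline{\lambda+w\mu}^F)$ remains strictly positive. This is exactly where the method gains traction only under a strong hypothesis: when $\lambda\gg\mu$ all relevant weights stay deep in the dominant chamber, the fold $\pi$ acts as the identity on the PRV summand, and the distinguished affine reflection $s_{\theta,\ell+\check h}$ certifies that every competing term is pushed outside the relevant alcove, leaving multiplicity exactly one (this is the content of Theorem \ref{thm:intro}). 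For arbitrary $\lambda,\mu\in P_\ell$ and $w\in W_\ell$ no such separation is available a priori, and one expects to need either the refined PRV multiplicities of Kumar \cite{Ku2} to control the signs exactly, or a positivity input of geometric origin in the spirit of Belkale's quantum Horn and saturation results \cite{Be}, in order to preclude total cancellation.
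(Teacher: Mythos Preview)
The statement you are attempting is labeled a \emph{Conjecture} in the paper, and the paper does not prove it in general; only the special case $\lambda\gg\mu$ (Theorem~\ref{c:prvfspecial}) is established. Your proposal is likewise not a proof: you set up the transport via $\pi$ correctly, but you explicitly acknowledge that the sign cancellation in \eqref{eq:prvf-transport} is the crux and that, absent the hypothesis $\lambda\gg\mu$ or an external positivity input (refined PRV, quantum saturation), the argument does not conclude. This diagnosis is accurate and is exactly why the paper confines itself to the special case; there is nothing further to compare against, since the paper offers no proof of Conjecture~\ref{c:prvf} in full.

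Beyond this, your reduction step contains an additional unresolved point. You assert, modulo ``Granting this,'' that the fold $\overline{\lambda+w\mu}^F$ (defined via the $\tfrac{\ell+\check h}{\ell}$-rescaled $W_\ell$-action) agrees with the $\rho$-shifted dot-action fold that governs $\pi$, so that $\pi\bigl(V(\overline{\lambda+\sigma\mu})\bigr)=\pm V(\overline{\lambda+w\mu}^F)$. These are two genuinely different affine actions on $P_{\mathbb R}$---one linear after rescaling, the other affine with center $-\rho$---and their orbit representatives in $P_\ell$ need not coincide. So even the identification of the target component is not secured in general. In the paper's special case this issue evaporates: one assumes $\lambda+w\mu\in P_\ell$ outright, whence $\overline{\lambda+w\mu}^F=\lambda+w\mu$ and $\pi$ fixes $V(\lambda+w\mu)$, so no comparison of folds is needed.
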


For $\frg=\mathfrak{sl_n}$, the conjecture is true by works of Belkale on quantum Horn and saturation
conjectures \cite{Be}.

\section{The case $\lambda\gg\mu$}\label{section:main}
Suppose $\lambda$ and $\mu$ are in $P_\ell$.  Consider the weight space decomposition $$V(\mu)=\displaystyle \sum_{\nu\in \frh^*}m_\mu(\nu)V_\nu$$ where $V_\nu=\{v \in V(\mu): h\cdot v=\nu (h)v\;\text{for all}\; h\in \frh\}$ and $m_\mu(\nu)$ is the dimension of the weight space $V_\nu$.
Let $\Pi(\mu)$ denote the set of all weights of $V(\mu)$.  We are looking at the special case $\lambda\gg\mu$, by that we mean $\lambda +\nu\in P^+$ for all $\nu \in  \Pi(\mu)$.
Thus, under the assumption $\lambda\gg\mu$,  PRV components of the tensor product $V(\lambda)\otimes V(\mu)$  are precisely  $\{V(\overline{\lambda+w \mu})=V(\lambda+w \mu), \;w\in W\}$. 

We will prove the following formulation of the problem in this special case.  

\begin{theorem}\label{c:prvfspecial}
Suppose $\lambda, \mu \in P_{\ell}$, and  $\lambda\gg\mu$.  

For any $w\in W$, if ${\lambda+w \mu}$ is in $P_{\ell}$, then $V({\lambda+w \mu})$ occurs with multiplicity one in $V({\lambda})\otimes^F V({\mu})$.
\end{theorem}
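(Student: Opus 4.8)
The plan is to make the tensor side completely explicit thanks to $\lambda\gg\mu$, then transport everything to the fusion ring through the homomorphism $\pi$ of Theorem~\ref{t:fhom}, using the affine reflection $s_{\theta,\ell+\check{h}}$ to control the truncation. First I would record that, by the Brauer--Klimyk formula, in $\mathcal{R}(\frg)$ one has $[V(\lambda)]\cdot[V(\mu)]=\sum_{\nu\in\Pi(\mu)}m_\mu(\nu)\,\chi(\lambda+\nu)$, where for $\xi\in P$ the symbol $\chi(\xi)$ is the virtual Weyl character: $\chi(\xi)=0$ when $\xi+\rho$ lies on a wall $H_{\alpha}$, and $\chi(\xi)=\varepsilon(u)\,[V(\overline{\xi})]$ otherwise, $u\in W$ being the element with $u(\xi+\rho)$ dominant. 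Since $\lambda\gg\mu$ forces $\lambda+\nu\in P^{+}$ for every $\nu\in\Pi(\mu)$, each $\chi(\lambda+\nu)$ is the genuine irreducible $[V(\lambda+\nu)]$, so
\[V(\lambda)\otimes V(\mu)=\bigoplus_{\nu\in\Pi(\mu)}m_\mu(\nu)\,V(\lambda+\nu).\]
In particular, as extremal weights of $V(\mu)$ have multiplicity one, $m^{\lambda+w\mu}_{\lambda,\mu}=m_\mu(w\mu)=1$, and then \eqref{ineq:tensorfusion} gives $n^{\lambda+w\mu}_{\lambda,\mu}\le 1$. Thus the whole theorem reduces to the nonvanishing assertion $n^{\lambda+w\mu}_{\lambda,\mu}\ge 1$.

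Next I would push the decomposition through $\pi$. Since $\lambda,\mu\in P_\ell$, the weights $\lambda+\rho$ and $\mu+\rho$ lie in the interior of $A_\ell^o$, hence $\pi(V(\lambda))=V(\lambda)$ and $\pi(V(\mu))=V(\mu)$; applying $\pi$ to the displayed identity and using that it is an algebra homomorphism for $\otimes^{F}$ yields, in $\mathcal{R}_\ell(\frg)$,
\[V(\lambda)\otimes^{F}V(\mu)=\sum_{\nu\in\Pi(\mu)}m_\mu(\nu)\,\pi(V(\lambda+\nu)).\]
Now extract the coefficient of $V(\lambda+w\mu)$. The hypothesis $\lambda+w\mu\in P_\ell$ means $\lambda+w\mu+\rho\in\operatorname{int}(A_\ell^o)$, so $\pi(V(\lambda+w\mu))=V(\lambda+w\mu)$ with sign $+1$, and the term $\nu=w\mu$ contributes exactly $+m_\mu(w\mu)=+1$. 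The proof then finishes once I show that no other $\nu\in\Pi(\mu)$ contributes a $\pm V(\lambda+w\mu)$, i.e. that for $\nu\neq w\mu$ the weight $\lambda+\nu+\rho$ either lies on an affine wall or is not in the $W_\ell$-orbit of $\lambda+w\mu+\rho$: in that case the coefficient of $V(\lambda+w\mu)$ equals $1$, matching the upper bound.

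The crux is this last point, split by the value of $(\lambda+\nu)(H_\theta)$. If $(\lambda+\nu)(H_\theta)\le\ell$, then $\lambda+\nu+\rho\in\operatorname{int}(A_\ell^o)$ too, and since the interior points of $A_\ell^o$ lie in distinct $W_\ell$-orbits this forces $\nu=w\mu$. The remaining case is $(\lambda+\nu)(H_\theta)\ge\ell+1$: then $\lambda+\nu+\rho$ is strictly dominant with $(\lambda+\nu+\rho)(H_\theta)\ge\ell+\check{h}$; if equality holds it is on the affine wall $H_{\theta,1}$ and $\pi$ annihilates it, so assume the level exceeds $\ell+\check{h}$ and apply $s_{\theta,\ell+\check{h}}$, carrying $\lambda+\nu+\rho$ to $\lambda+\nu-d\theta+\rho$ with $d=(\lambda+\nu)(H_\theta)-(\ell+1)\ge 1$. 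The essential inputs are the bounds $\lambda(H_\theta)\le\ell$ and $\nu(H_\theta)\le\mu(H_\theta)\le\ell$ (the latter because $(\,\cdot\mid\theta)$ attains its maximum over $\Pi(\mu)\subseteq\operatorname{conv}(W\mu)$ at the dominant weight $\mu$), which force $d\le\ell-1$ and put $\lambda+\nu-d\theta+\rho$ strictly between the walls $H_{\theta,0}$ and $H_{\theta,1}$; combined with the hypothesis $(\lambda+w\mu)(H_\theta)\le\ell$, the (terminating) reduction of $\lambda+\nu+\rho$ into $A_\ell^o$ by the affine Weyl group --- which subtracts from $\lambda+\nu$ a nonzero element of $Q^{\mathrm{long}}$ assembled from $\theta$ via further applications of $s_{\theta,\ell+\check{h}}$ and of the finite reflections --- shows that the resulting alcove representative can be $\lambda+w\mu+\rho$ only if $\nu=w\mu$, contradicting $d\ge 1$. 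I expect the main obstacle to be exactly controlling this reduction uniformly over all simple $\frg$ --- the interplay of ``$\nu$ is a weight of $V(\mu)$'' with ``$\lambda$, $\mu$ and $\lambda+w\mu$ all have level $\le\ell$'' --- since that is the only place the restriction $\lambda+w\mu\in P_\ell$ is genuinely used. Once it is established, running the same bookkeeping over all $\nu\in\Pi(\mu)$ delivers the promised explicit decomposition of $V(\lambda)\otimes^{F}V(\mu)$, in which $V(\lambda+w\mu)$ occurs with multiplicity one.
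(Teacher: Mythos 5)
Your skeleton matches the paper's: the Brauer--Klimyk identity giving $V(\lambda)\otimes V(\mu)=\sum_{\nu\in\Pi(\mu)}m_\mu(\nu)V(\lambda+\nu)$, the upper bound $n^{\lambda+w\mu}_{\lambda,\mu}\le m^{\lambda+w\mu}_{\lambda,\mu}=1$ from (\ref{ineq:tensorfusion}), the transport through $\pi$, and the use of $s_{\theta,\ell+\check{h}}$ on the components of level greater than $\ell+1$. However, the step you yourself flag as ``the main obstacle'' is precisely the content of the paper's Proposition \ref{pro:shiftedwlevel}, and your sketch of it does not close. First, knowing that $s_{\theta,\ell+\check{h}}(\lambda+\nu+\rho)=\lambda+\nu-d\theta+\rho$ lies strictly between $H_{\theta,0}$ and $H_{\theta,1}$ does not place it in $A_\ell^o$: dominance with respect to every simple root is still needed, and without it your ``terminating reduction'' may involve further reflections and translations, at which point the claim that the alcove representative ``can be $\lambda+w\mu+\rho$ only if $\nu=w\mu$'' has no justification --- a priori a different high-level $\nu'$ could reduce onto $\lambda+w\mu+\rho$ with sign $-1$ and cancel the PRV component, which is the one thing the proof must exclude. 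The paper resolves this by an $\mathfrak{sl}_2(\theta)$ root-string argument: $\nu-d\theta$ lies on the (unbroken) $\theta$-string through $\nu$ inside $\Pi(\mu)$, so $\lambda+\nu-d\theta$ is again $\lambda$ plus a weight of $V(\mu)$ and hence dominant by $\lambda\gg\mu$; thus a single reflection already completes the reduction into $P_\ell$.

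Second, and more importantly, you must still rule out $s_{\theta,\ell+\check{h}}\cdot(\lambda+\nu)=\lambda+w'\mu$ for \emph{every} $w'\in W$, and your proposal contains no mechanism for this. The paper's key idea here is that, under $\lambda\gg\mu$, the PRV weights are exactly the vertices of the convex hull of $\lambda+\Pi(\mu)$, while the strict inequalities established in (\ref{in:case1}) and (\ref{in:case2}) show that $s_{\theta,\ell+\check{h}}\cdot(\lambda+\nu)$ sits strictly in the interior of its $\theta$-string, hence is not an extremal weight of $\lambda+\Pi(\mu)$ and therefore not of the form $\lambda+w'\mu$. Without these two ingredients the non-cancellation assertion, and with it the theorem, is not established; everything up to that point in your argument is correct but is also the routine part.
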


The proof the above theorem will be given at the end of this section.  
To establish nonvanishing of structure coefficients for PRV components, and to determine the precise multiplicity we investigate the tensor product structure in further detail.

Let $\mathbb{Z}[P]$ be the group ring of $P$ with canonical basis $\{e^\mu, \mu \in P\}$.
Consider the character homomorphism $\chi: \mathcal{R}(\frg) \to
\mathbb{Z}[P]$ mapping $V \mapsto \sum_{\mu\in P}(\dim(V_{\mu}))e^{\mu}$.
For $\lambda$ dominant integral, denote $\chi(V(\lambda))$ in short by $\chi_\lambda$; it is given by
the Weyl character formula
$$\chi_{\lambda}=\frac{\sum_{w \in W}\varepsilon(w) e^{w(\lambda+\rho)}}
{\sum_{w \in W} \varepsilon(w) e^{w\rho}}.$$
Denote $D:=\sum_{w \in W} \varepsilon(w) e^{w\rho}$.  Then, using the fact that  the elements of $\Pi(\mu)$ are permuted by $W$ and $\text{dim} V_\nu=\text{dim}V_{w \nu}$ for any $w\in W$, 
\[
\chi_\lambda \chi_\mu =\displaystyle \sum_{\nu \in \Pi(\mu)} m_\mu(\nu) e^\nu  D^{-1}\sum_{w \in W} \varepsilon(w) e^{w(\lambda+\rho)}= D^{-1} \displaystyle \sum_{w \in W}\sum_{\nu \in \Pi(\mu)}m_\mu(\nu) \varepsilon(w) e^{w(\lambda+\nu+\rho)}
\]
Now interchanging the order of summation, as $\lambda +\nu$ is dominant integral for all $\nu \in  \Pi(\mu)$, we may use the Weyl character formula again
and conclude that
$$\chi_\lambda \chi_\mu= \displaystyle \sum_{\nu \in \Pi(\mu)} m_\mu(\nu) D^{-1}\sum_{w \in W} \varepsilon(w) e^{w(\lambda+\nu+\rho)}
=\displaystyle \sum_{\nu \in \Pi(\mu)} m_\mu(\nu) \chi_{\lambda+\nu}.$$
The above expression is a special case of a formula attributed to Brauer and Kliymk independently. (See excercise 9 in Section 24 of Humphreys 
\cite{H} for the formulation of the general case, and the historical references therein.) 
Thus the tensor product decomposition of $V(\lambda)$ and $V(\mu)$ is very explicit when $\lambda\gg\mu$, it is
\begin{equation}\label{eq:tensor}
V(\lambda)\otimes V(\mu)=\displaystyle \sum_{\nu \in \Pi(\mu)}m_\mu(\nu) V(\lambda+\nu).
\end{equation}

As any component of $V(\lambda)\otimes V(\mu)$ has level at most $2\ell$, any highest weight $\lambda+\nu$ of the decomposition in (\ref{eq:tensor}) that does not lie in $P_\ell$ satisfies the inequality $\ell+1 \leq (\lambda+\nu)(H_{\theta})\leq 2\ell$.  

We start with grouping the terms on the right hand side of  (\ref{eq:tensor}) as follows.  We call
$$S:=\{\nu \in \Pi(\mu): \nu \text{ is not of the form } w\mu \text{ for any } w\in W\}.$$
Then, using the fact that $\text{dim} V_\mu=\text{dim}V_{w \mu}=1, \; (w\in W)$ in $V(\mu)$,
\begin{equation}\label{eq:tensor2}
V(\lambda)\otimes V(\mu)=\displaystyle \sum_{\nu \in S} m_\mu(\nu) V(\lambda+\nu)+\sum_{w \in W}  V(\lambda+w\mu).\nonumber
\end{equation}
We further group the representations above in terms of their levels as
\begin{multline}\label{eq:tensorgroup}
V(\lambda)\otimes V(\mu) 
=\displaystyle \sum_{\substack{\{\nu \in S:\\ \lambda+\nu \in P_\ell\}}} m_\mu(\nu) V(\lambda+\nu)+
\sum_{\substack{\{\nu \in S:\\ (\lambda+\nu)(H_\theta)=\ell+1\}}} m_\mu(\nu) V(\lambda+\nu)+\\
\displaystyle \sum_{\substack{\{\nu \in S: \\ \ell+1<(\lambda+\nu)(H_\theta)\leq 2\ell\}}} m_\mu(\nu)V(\lambda+\nu)+
\sum_{\substack{\{w \in W: \\ \lambda+w\mu \in P_\ell\}}}  V(\lambda+w\mu) +\\
\displaystyle \sum_{\substack{\{w \in W: \\ (\lambda+w\mu)(H_\theta)=\ell+1\}}} m_\mu(\nu)V(\lambda+w\mu)+\sum_{\substack{\{w \in W: \\ \ell+1 <(\lambda+w\mu)(H_\theta)\leq 2\ell\}}}  V(\lambda+w\mu).
\end{multline}

We now investigate the image of the tensor product decomposition as expressed in (\ref{eq:tensorgroup}) under the homomorphism $\pi$ of Theorem \ref{t:fhom}.

 Any representation $V(\xi)$ in (\ref{eq:tensorgroup}) with highest weight $\xi$ satisfying $(\xi+\rho|\theta)=\ell+\check{h}$ (i.e.~any representation $V(\xi)$ of level $\xi(H_\theta)=\ell+1$) is mapped to zero under $\pi$.  If $\xi$ is already in $P_\ell$, then $\pi(V(\xi))=V(\xi)$.  We now further investigate the remaining components  of the above decomposition, that is,  representations $V(\xi)$ in (\ref{eq:tensorgroup}) whose level is strickly greater than $\ell+1$.  

Consider the following element of $W_\ell$
$$s_{\theta,\ell+\check{h}}(\xi):=s_\theta(\xi)+(\ell+\check{h})\theta.$$ 
Its shifted action is given by
$$s_{\theta,\ell+\check{h}}\cdot \xi:=s_{\theta,\ell+\check{h}}(\xi+\rho)-\rho=s_\theta(\xi)+(\ell+1)\theta.$$
The following proposition reveals the significance of this distinguished element under the assumption $\lambda\gg\mu$.

\begin{proposition}\label{pro:shiftedwlevel}
Suppose $\lambda, \mu \in P_\ell$ and $\lambda\gg\mu$.

For any $V(\xi)$ in the decomposition of $V(\lambda)\otimes V(\mu)$ with 
\begin{equation}\label{in:level}
\ell+1 <\xi(H_\theta)\leq 2\ell,
\end{equation}
we have that $s_{\theta,l+\check{h}}\cdot\xi$ is in $P_\ell$. 

Moreover, $V(s_{\theta,l+\check{h}}\cdot\xi)$ is not of the form $V(\lambda+w\mu)$ for any $w\in W$, that is, it is not a PRV component for the tensor product.
\end{proposition}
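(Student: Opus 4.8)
The plan is to feed the explicit tensor product decomposition (\ref{eq:tensor}) into the formula for the shifted action. Since $V(\xi)$ occurs in $V(\lambda)\otimes V(\mu)$, I would write $\xi=\lambda+\nu$ with $\nu\in\Pi(\mu)$, and set $k:=\xi(H_\theta)$, so that (\ref{in:level}) together with integrality gives $\ell+2\le k\le 2\ell$. From $s_{\theta,\ell+\check{h}}\cdot\xi=s_\theta(\xi)+(\ell+1)\theta$ one gets $s_{\theta,\ell+\check{h}}\cdot\xi=\xi-m\theta$ with $m:=k-\ell-1\ge 1$. The one identity I would record — the hinge of the whole argument — is
\[
s_{\theta,\ell+\check{h}}\cdot\xi=\xi-m\theta=\lambda+s_\theta(\nu)+j\theta,\qquad j:=\ell+1-\lambda(H_\theta),
\]
which follows from $s_\theta(\nu)=\nu-\nu(H_\theta)\theta$ and $\nu(H_\theta)=k-\lambda(H_\theta)=m+j$; here $j\ge 1$ precisely because $\lambda\in P_\ell$.

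For the first assertion I would argue as follows. The set $\Pi(\mu)$ is $W$-stable, so $s_\theta(\nu)\in\Pi(\mu)$, and hence $\lambda+s_\theta(\nu)\in P^+$ by the hypothesis $\lambda\gg\mu$. Since the highest root $\theta$ is a dominant weight and $j\ge 0$, the vector $\xi-m\theta=(\lambda+s_\theta(\nu))+j\theta$ again lies in $P^+$. For the level, $(\xi-m\theta)(H_\theta)=k-2m=2(\ell+1)-k\le\ell$ by $k\ge\ell+2$; therefore $s_{\theta,\ell+\check{h}}\cdot\xi\in P_\ell$.

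For the second assertion I would argue by contradiction, the decisive input being that every weight of $V(\mu)$ lies in the convex hull of $W\mu$, so $\|\psi\|\le\|\mu\|$ for all $\psi\in\Pi(\mu)$ while $\|w\mu\|=\|\mu\|$ for all $w\in W$. Suppose $s_{\theta,\ell+\check{h}}\cdot\xi=\lambda+w\mu$ for some $w\in W$. Cancelling $\lambda$ in the displayed identity gives $s_\theta(\nu)+j\theta=w\mu$; applying the isometry $s_\theta$ (with $s_\theta\theta=-\theta$) yields $\nu-j\theta=s_\theta(w\mu)\in W\mu$, so $\|\nu-j\theta\|=\|\mu\|\ge\|\nu\|$. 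Expanding $\|\nu-j\theta\|^2=\|\nu\|^2-2j(\nu,\theta)+j^2(\theta,\theta)$, using $(\theta,\theta)=2$, and dividing by $2j>0$, one gets $(\nu,\theta)\le j$. But $(\nu,\theta)=\nu(H_\theta)=m+j$ with $m\ge 1$, a contradiction.

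I do not expect a genuine obstacle here; the content is entirely in choosing to expand $s_{\theta,\ell+\check{h}}\cdot\xi$ about $s_\theta(\nu)$ rather than about $\nu$, which is exactly what converts the hypothesis $\lambda\in P_\ell$ into the inequality $j\ge 1$ that drives both parts. The only points requiring a line of care are verifying that the two defining conditions of $P_\ell$ hold simultaneously — immediate once $\theta$ and $\lambda+s_\theta(\nu)$ are seen to be dominant — and ruling out that the reflected weight accidentally falls back onto a PRV weight, which is precisely what the norm inequality above excludes.
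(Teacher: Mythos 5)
Your proof is correct, and it takes a genuinely different route from the paper's, even though both arguments ultimately rest on the same two strict inequalities: $m=\xi(H_\theta)-(\ell+1)\ge 1$ (from the level hypothesis) and $j=\ell+1-\lambda(H_\theta)\ge 1$ (from $\lambda\in P_\ell$). For dominance, the paper splits into the cases $\xi=\lambda+w\mu$ and $\xi=\lambda+\nu$ with $\nu\in S$, and in each case uses the $\mathfrak{sl}_2(\theta)$-module structure of $V(\mu)$ to show that $s_{\theta,\ell+\check{h}}\cdot\xi$ lies on the unbroken $\theta$-string of weights through $\nu$ shifted by $\lambda$, so that it equals $\lambda$ plus a weight of $V(\mu)$ and is therefore dominant by $\lambda\gg\mu$; you instead rewrite the weight as $\lambda+s_\theta(\nu)+j\theta$ and invoke dominance of the highest root, which avoids the root-string (unbrokenness) input and the case split altogether. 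For the non-PRV claim, the paper argues geometrically: PRV weights are the vertices of the convex hull of $\lambda+\Pi(\mu)$, a vertex must sit at an endpoint of every root string through it, and the strict inequalities place $s_{\theta,\ell+\check{h}}\cdot\xi$ in the interior of its $\theta$-string. Your norm computation (if $\nu-j\theta\in W\mu$ then $(\nu|\theta)\le j$, contradicting $(\nu|\theta)=\nu(H_\theta)=m+j$ with $m\ge 1$) replaces that convexity argument by the standard fact that every weight of $V(\mu)$ has length at most that of $\mu$, with the $W$-orbit of $\mu$ attaining it. Both proofs are elementary and complete; yours is somewhat shorter and more uniform, while the paper's makes visible the root-string picture that motivates the choice of the element $s_{\theta,\ell+\check{h}}$.
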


\begin{proof}
The level of $s_{\theta,\ell+\check{h}}\cdot\xi$ is given by
$$(s_{\theta,\ell+\check{h}}\cdot\xi)(H_\theta)=(\xi-\xi(H_\theta)\theta+(\ell+1)\theta)(H_\theta)=-\xi(H_\theta)+2(\ell+1).$$
Using the above equation together with inequality (\ref{in:level}), we get that 
$$2\leq (s_{\theta,\ell+\check{h}}\cdot\xi)(H_\theta)\leq \ell.$$
Thus to prove that $s_{\theta,l+\check{h}}\cdot\xi$ is in $P_\ell$, it suffices to show that it is dominant.
We will show this in two cases; though this separation of cases is not necessary, it makes the exposition clearer.

{\it Case 1:} Suppose $\xi$ is of the form $\xi=\lambda+w\mu$ ($w\in W$), satisfying inequality (\ref{in:level}). 
We then necessarily have  $(w\mu)(H_\theta)>0$ (as $\lambda \in P_\ell$), and thus $w\mu-s_\theta(w\mu)$ will be a positive integer multiple of $\theta$.

Now consider weights in $\Pi(\mu)$ of the form $w\mu+k\theta$ $(k\in \mathbb Z)$. 
The subspace of $V(\mu)$ spanned by the weight spaces $V_{w\mu+k\theta}\; (k\in \mathbb Z)$ is invariant under $\mathfrak{sl}_2(\theta)$, thus the 
weights in $\Pi(\mu)$ of the form
\begin{equation}\label{string1}
w\mu, w\mu-\theta, \cdots, w\mu-(w\mu)(H_\theta)\theta=s_\theta(w\mu)
\end{equation}
form a connected (i.e.~unbroken) string.
We will show that
$$s_{\theta,\ell+\check{h}}\cdot(\lambda+w\mu)=\lambda +w\mu-(\lambda +w\mu)(H_\theta)\theta+(\ell+1)\theta$$ 
is in the connected string 
\begin{equation}\label{shiftedstring}
\lambda+w\mu, \lambda+w\mu-\theta, \cdots, \lambda+w\mu+k\theta, \cdots,\lambda+s_\theta(w\mu),
\end{equation}
which is obtained from (\ref{string1}) by adding $\lambda$ to each element.
This will verify that $s_{\theta,\ell+\check{h}}\cdot(\lambda+w\mu)$ is in the dominant chamber, as all weights in the string (\ref{shiftedstring}) are dominant by our assumption $\lambda\gg\mu$.

Now express $s_{\theta,\ell+\check{h}}\cdot(\lambda+w\mu)=\lambda+w\mu+m\theta,$ where
$$m= -(\lambda +w\mu)(H_\theta)+(\ell+1).$$  To show that $s_{\theta,\ell+\check{h}}\cdot(\lambda+w\mu)$ is in the connected string (\ref{shiftedstring}), it suffices to show that 
\begin{equation}\label{in:case1}
-(w\mu)(H_\theta)\leq m\leq 0.
\end{equation}
As $\lambda+w\mu$ satisfies inequality (\ref{in:level}) we immediately get that $m<0$. Also, as $\lambda \in P_\ell$, we have that $m=(\ell+1-\lambda(H_\theta))-(w\mu)(H_\theta)> -(w\mu)(H_\theta)$. Thus both inequalities in (\ref{in:case1}) are satisfied, and in fact strictly.

{\it Case 2:} Suppose $\xi$ is of the form $\xi=\lambda+\nu$ with $\nu\in S$ satisfying inequality (\ref{in:level}). As $\lambda \in P_\ell$, we again necessarily have  $\nu(H_\theta)>0$. 

As in case 1,  the weights in $\Pi(\mu)$ of the form $\nu+k\theta$ form the connected string (weights of any simple $\mathfrak{sl}_2(\theta)$ submodule of $V(\mu)$ containing $\nu$)
\begin{equation}
\nu+q\theta, \cdots, \nu, \cdots, \nu-r\theta,\nonumber
\end{equation}
where $r, q$ are nonnegative integers satisfying $r-q=\nu(H_\theta)>0$. 
We will show that
$$s_{\theta,\ell+\check{h}}\cdot(\lambda+\nu)=\lambda +\nu-(\lambda +\nu)(H_\theta)\theta+(\ell+1)\theta$$ 
is in the $\lambda$-shifted connected string
\begin{equation}\label{shiftedstring2}
\lambda+\nu+q\theta, \cdots, \lambda+\nu, \cdots, \lambda+\nu-r\theta.
\end{equation}
This will verify that $s_{\theta,\ell+\check{h}}\cdot(\lambda+\nu)$ is in the dominant chamber, as all weights in the string (\ref{shiftedstring2}) are dominant, again by the assumption $\lambda \gg\mu$.

Now express $s_{\theta,\ell+\check{h}}\cdot(\lambda+\nu)=\lambda +\nu+n\theta$ with $n=\ell+1-(\lambda+\nu)(H_\theta)$. To show that $s_{\theta,\ell+\check{h}}\cdot(\lambda+\nu)$ is in the connected string (\ref{shiftedstring2}), it suffices to show that 
\begin{equation}\label{in:case2}
-r\leq n\leq q.
\end{equation}
As $\lambda+\nu$ satisfies inequality (\ref{in:level}), we have $n<0$ and hence $n<q$ as $q\geq0$. Expressing $r=q+\nu(H_\theta)$, and using the fact that $\lambda \in P_\ell$, we immediately get  $-r<n$.  Thus both inequalities in (\ref{in:case2}) are (strictly) satisfied.

To complete the proof of the proposition what remains to show is that $s_{\theta,l+\check{h}}\cdot\xi$ is not of the form $\lambda+w\mu$ for any $w$ in $W$, and thus is not a PRV weight for the tensor product.  We will now explain how this immediately follows from our calculations above revealing that inequalities (\ref{in:case1}) and (\ref{in:case2}) are strictly satisfied.

Using the explicit form of the tensor product decomposition (\ref{eq:tensor}), highest weights of  PRV components appearing in $V(\lambda)\otimes V(\mu)$ are in one to one correspondence with vertices of the convex hull of the set of weights of $V(\mu)$ translated by $\lambda$. 
For any such vertex $v$, simply because it lies on the boundary of the $\lambda$-translated convex hull of $\Pi(\mu)$, $v-\lambda \in \Pi(\mu)$ is necessarily a weight at one of the end points of any root strings in $\Pi(\mu)$ containing it. Then, 
to prove the claim it suffices to show that $s_{\theta,l+\check{h}}\cdot\xi$ does not lie at any of the end points of the $\theta$-string (as in equations (\ref{shiftedstring}) or (\ref{shiftedstring2})) that it is  contained.

To show this, let us first suppose $\eta=s_{\theta,\ell+\check{h}}\cdot\xi$ and  $\xi=\lambda+w\mu$  ($w\in W$) as in Case 1.  Then, as $\eta=\lambda+w\mu+m\theta$ lies on the string (\ref{shiftedstring}), 
the only possible vertex of the $\lambda$-shifted convex hull of $\Pi(\mu)$ that $\eta$ can be is $\lambda+ s_\theta(w\mu)=\lambda+w\mu -(w\mu)(H_\theta)\theta$.  But in Case 1 above, we showed that $m> -(w\mu)(H_\theta)$  which implies that $\eta\neq \lambda+w\mu -(w\mu)(H_\theta)\theta$, hence $\eta$ is not a PRV weight. (As $m<0$, $\eta$ is clearly  not equal to $\lambda+w\mu$.)

Similarly, suppose $\eta=s_{\theta,\ell+\check{h}}\cdot\xi$ and $\xi=\lambda+\nu$  with $\nu \in S$ as in Case 2.  Consider again the $\theta$-string (\ref{shiftedstring2}) that $\eta=\lambda+\nu+n\theta$ is an element of.
The only possible vertices of the $\lambda$-shifted convex hull of the weights of $V(\mu)$ that may be on that string would be the initial weight $\lambda+\nu-r\theta$ or the terminal weight $\lambda+\nu+q\theta$ (possibly neither).  In Case 2 above we showed that $-r<n<q$, which implies that $\eta$ is neither of those weights, thus  it is not a vertex, and hence cannot be a PRV weight.
\end{proof}


The following result immediately follows from the definition of $\pi$.
\begin{corollary}\label{co:thetashift}
Suppose $\lambda, \mu \in P_\ell$ and $\lambda\gg\mu$. For any $V(\xi)$ in the decomposition of $V(\lambda)\otimes V(\mu)$ with $\ell+1 <\xi(H_\theta)\leq 2\ell$, we have $\pi(V(\xi))=-V(s_{\theta,l+\check{h}}\cdot\xi)$.
\end{corollary}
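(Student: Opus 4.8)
The plan is to unwind the definition of $\pi$ from Section~\ref{section:defn_pi} at the highest weight $\xi$, with Proposition~\ref{pro:shiftedwlevel} supplying the required dominant representative. Write $\eta := s_{\theta,\ell+\check{h}}\cdot\xi$. By Proposition~\ref{pro:shiftedwlevel} we already know $\eta \in P_\ell$, so it only remains to check that $\xi+\rho$ does not lie on an affine wall, to identify the affine Weyl group element carrying $\eta+\rho$ to $\xi+\rho$, and to compute its sign.

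First I would record that, by the definition of the shifted action, $\eta+\rho = s_{\theta,\ell+\check{h}}(\xi+\rho)$, and that $s_{\theta,\ell+\check{h}}$ is an affine reflection: using $s_\theta(\theta)=-\theta$ one checks directly that $s_{\theta,\ell+\check{h}}^2=\mathrm{id}$, whence $\xi+\rho = s_{\theta,\ell+\check{h}}(\eta+\rho)$. Since $\theta$ is a long root, $s_{\theta,\ell+\check{h}} = t_{(\ell+\check{h})\theta}\circ s_\theta$ is a genuine element of $W_\ell = W\ltimes(\ell+\check{h})Q^{\text{long}}$, with linear part $s_\theta$; hence its sign is $\varepsilon(s_{\theta,\ell+\check{h}})=\varepsilon(s_\theta)=-1$.

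Next, because $\eta\in P_\ell$ the weight $\eta+\rho$ lies in the interior of the fundamental alcove $A_\ell^o$ (the remark following the definition of $A_\ell^o$); as $W_\ell$ permutes the open alcoves, $\xi+\rho = s_{\theta,\ell+\check{h}}(\eta+\rho)$ lies in the interior of an alcove and therefore on no affine wall. Feeding $w=s_{\theta,\ell+\check{h}}$ and $\mu=\eta$ into the definition of $\pi$ then gives $\pi(V(\xi)) = \varepsilon(s_{\theta,\ell+\check{h}})\,V(\eta) = -V(s_{\theta,\ell+\check{h}}\cdot\xi)$, which is the assertion.

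The argument is essentially bookkeeping once Proposition~\ref{pro:shiftedwlevel} is available; the only points that need a little care are confirming that $s_{\theta,\ell+\check{h}}$ really lies in $W_\ell$ (this is where $\theta$ being long is used) and is an involution, and pinning down the sign $\varepsilon$, for which it suffices to use that $\varepsilon$ of an affine Weyl group element equals the sign of its linear part. I do not anticipate any genuine obstacle here.
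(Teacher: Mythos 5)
Your proof is correct and is exactly the argument the paper intends: the paper simply asserts the corollary ``immediately follows from the definition of $\pi$'' (given Proposition~\ref{pro:shiftedwlevel}), and your write-up supplies precisely the implicit bookkeeping --- that $s_{\theta,\ell+\check{h}}$ is the affine reflection in $H_{\theta,1}$, hence lies in $W_\ell$ with sign $-1$, and that $\xi+\rho$ avoids the affine walls because its $W_\ell$-translate $\eta+\rho$ lies in the interior of $A_\ell^o$.
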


By Corollary \ref{co:thetashift} and Theorem \ref{t:fhom}, we have the following very explicit expression for the fusion product of $V(\lambda)$ and $V(\mu)$ (as virtual modules) whenever $\lambda\gg\mu$ 
\begin{multline}\label{eq:fusionp}
V(\lambda)\otimes^F V(\mu) 
=\displaystyle \sum_{\substack{\{\nu \in S:\\ \lambda+\nu \in P_\ell\}}} m_\mu(\nu) V(\lambda+\nu)-
\sum_{\substack{\{\nu \in S: \\ \ell+1<(\lambda+\nu)(H_\theta)\leq 2\ell\}}} m_\mu(\nu)V(s_{\theta,\ell+\check{h}}\cdot(\lambda+\nu))\\
+\displaystyle \sum_{\substack{\{w \in W: \\ \lambda+w\mu \in P_\ell\}}}  V(\lambda+w\mu)-\sum_{\substack{\{w \in W: \\ \ell+1 <(\lambda+w\mu)(H_\theta)\leq 2\ell\}}}  V(s_{\theta,\ell+\check{h}}\cdot(\lambda+w\mu)).
\end{multline}

\paragraph{Proof of Theorem \ref{c:prvfspecial}.}
By Proposition \ref{pro:shiftedwlevel},  none of the weights in decomposition (\ref{eq:fusionp}) belonging to the set
\begin{multline}
\tilde S:=\{s_{\theta,\ell+\check{h}}\cdot(\lambda+w\mu):\ell+1 <(\lambda+w\mu)(H_\theta)\leq 2\ell,\; w\in W \}\cup\\ \nonumber
\{s_{\theta,\ell+\check{h}}\cdot(\lambda+\nu):\ell+1<(\lambda+\nu)(H_\theta)\leq 2\ell,\; \nu \in S \}
\end{multline}
\noindent is of the form $\lambda+w\mu$ for any $w\in W$.  Then, using equation (\ref{eq:fusionp}) we get that the representations $\{V(\lambda+w\mu): \lambda+w\mu \in P_\ell\}$ are preserved in the fusion product decomposition, that is, no `cancellation' by virtual modules $-V(\eta)$, $\eta \in \tilde S$.  This proves the theorem.

More explicitly, for $\lambda\gg\mu$, by Theorem \ref{t:fhom}, inequality (\ref{ineq:tensorfusion}), and Theorem \ref{c:prvfspecial},  we have
\begin{multline}\label{eq:fusionson}
V(\lambda)\otimes^F V(\mu) 
=\displaystyle \sum_{\substack{\nu \in S\\ \lambda+\nu \in P_\ell}} \tilde{m}_\mu(\nu) V(\lambda+\nu)
+\displaystyle \sum_{\substack{w \in W \\ \lambda+w\mu \in P_\ell}}  V(\lambda+w\mu) .
\end{multline}
for some nonnegative integer $\tilde{m}_\mu(\nu)=m_\mu(\nu)-m_\mu(\beta)$, where $\beta \in \Pi(\mu)$ satisfies
$$s_{\theta,\ell+\check{h}}^{-1}(\lambda+\beta+\rho)-\lambda-\rho=\nu\;\;\text{and}\;\; \lambda+\beta \in P_\ell.$$
(If no such $\beta$ exists we take $m_\mu(\beta)=0$.)



\end{document}